\theoremstyle{plain}
\newtheorem{theo}{Theorem} 
\newtheorem{prop}[theo]{Proposition}
\newtheorem{coro}[theo]{Corollary}
\theoremstyle{definition}
\newtheorem{exam}{Example}
\newtheorem{defi}{Definition} 
\theoremstyle{remark}
\newcommand\Ob{\mathcal S(H)}
\newcommand\Prj{\mathcal P(H)}
\newcommand\Pf{\mathcal{PF}}
\newcommand\Pfun{\Pf(I,V)}
\newcommand\colo{\colon\,}
\newcommand\Meas{\mathcal M(\mathcal A)}
\newcommand\dom{\mathop{\mathrm{dom}}}
\newcommand\ran{\mathop{\mathrm{ran}}}
\newcommand\ovran{\mathop{\overline{\mathrm{ran}}}}
\newcommand\bck{\mathbin{\mbox{\raisebox{-.10ex}{$-$}%
\hspace{-1.21ex}\raisebox{.90ex}{.}}\,}}
\newcommand\supp{\mathop{\mathrm{supp}}}
\newcommand\RR{\mathbb R}
\newcommand\sminus\smallsetminus
\let\ph\phi
\renewcommand\phi\varphi
\newcommand\sqsub\sqsubset
\newcommand{\rcap}{\overleftarrow\cap}
\newcommand\rwedge{\overleftarrow\wedge}
\newcommand\rcurwedge{\overleftarrow\curlywedge}
\newcommand\para{\parallel}
\newcommand{\comp}{\mathrel{\mbox{\,\raisebox{.66ex}{$\scriptscriptstyle
        |$}\hspace{-.86ex}\raisebox{-.64ex}{$\circ$}}}}
\newcounter{perp}\setcounter{perp}{0}
\newenvironment{perpenum}{\begin{enumerate}
        \setlength{\itemindent}{1.94em}
        \setcounter{enumi}{\value{perp}}

        \setlength{\itemsep}{0pt}}
{\setcounter{perp}{\value{enumi}}\end{enumerate}}
\newcommand{\rperp}[1]{($\perp_{\ref{#1}}$)}
\newenvironment{myenum}{\begin{enumerate}

        \setlength{\itemsep}{0pt}}
        {\end{enumerate}}
\newcounter{vee}\setcounter{vee}{0}
\newenvironment{veeenum}{\begin{enumerate}
        \setlength{\itemindent}{1.94em}
        \setcounter{enumi}{\value{vee}}

        \setlength{\itemsep}{0pt}}
{\setcounter{vee}{\value{enumi}}\end{enumerate}}
\newcommand{\rvee}[1]{($\vee_{\ref{#1}}$)}
\newcounter{oplus}\setcounter{oplus}{0}
\newenvironment{oplusenum}{\begin{enumerate}
        \setlength{\itemindent}{1.94em}
        \setcounter{enumi}{\value{oplus}}

        \setlength{\itemsep}{0pt}}
{\setcounter{oplus}{\value{enumi}}\end{enumerate}}
\newcommand{\roplus}[1]{($\oplus_{\ref{#1}}$)}
\newcounter{ominus}\setcounter{ominus}{0}
\newcounter{minus}\setcounter{minus}{0}
\newenvironment{minusenum}{\begin{enumerate}
        \setlength{\itemindent}{1.94em}
        \setcounter{enumi}{\value{minus}}

        \setlength{\itemsep}{0pt}}
{\setcounter{minus}{\value{enumi}}\end{enumerate}}
\newcommand{\rminus}[1]{($-_{\ref{#1}}$)}
\newcounter{rwedge}\setcounter{rwedge}{0}
\newcounter{sqsub}\setcounter{sqsub}{0}
\newenvironment{sqsubenum}{\begin{enumerate}
        \setlength{\itemindent}{1.94em}
        \setcounter{enumi}{\value{sqsub}}

        \setlength{\itemsep}{0pt}}
{\setcounter{sqsub}{\value{enumi}}\end{enumerate}}
\newcommand{\rsqsub}[1]{($\sqsub_{\ref{#1}}$)}
\newcounter{rwed}\setcounter{rwed}{0}
\begin{document}
\title[Further remarks on an order for quantum observables]%
{Further remarks on an order for quantum observables}
\author{J\=anis C{\=\i}rulis}
\date{\today}

\newcommand{\acr}{\newline\indent}
\address{
    Faculty of Computing\acr
    University of Latvia\acr
    Rai\c{n}a b., 19\acr
    Riga LV-1586\acr
    LATVIA}
\email{jc@lanet.lv}

\date{}

\thanks{This work was supported by ESF project
No.\ 2009/0216/1DP1.1.1.2.0/09/APIA/VIAA/044}

\subjclass{Primary 81Q10; Secondary 06A12, 46L10, 47L30, 81P10}
\keywords{bounded self-adjoint operator, generalized orthoalgebra, nearsemilattice, orthogonality, orthomodular lattice, quasi-orthomodular, quantum observable, skew meet}

\begin{abstract}
S.~Gudder and, later, S.~Pulmanov\'a and E.~Vincekov\'a, have studied in two recent papers a certain ordering of bounded self-adjoint operators on a Hilbert space. We present some further results on this ordering and show that some structure theorems of the ordered set of operators can be obtained in a more abstract setting of posets having the upper bound property and equipped with a certain orthogonality relation.
\end{abstract}

\maketitle

\section{Introduction}  \label{intro}

In \cite{ord1}, S.~Gudder introduced a certain order for quantum observables, in fact, on the set $\Ob$ of bounded self-adjoint operators on a complex Hilbert space $H$, and suggested to call it the \emph{logical order}. He demonstrated, in particular, that $\Ob$ is a generalized orthoalgebra; the logical order is by definition the natural order of this algebra. He also showed that this is not a lattice order, but nevertheless every pair of observables having a common upper bound has a join and a meet with respect to this order. Such a poset was called by him a \emph{near-lattice} (but see the beginning of the next section). Actually, every initial segment of $\Ob$ is even a $\sigma$-orthomodular lattice. Also the commutative case (in which observables are represented by random variables on a probability space) was considered in \cite{ord1}; as noted by the author of that paper, this case actually served as motivation and a source of intuition for results and proofs of the general case.

The properties of the new ordering were studied in more detail by S.~Pulmannov\'a and E.~Vincekov\'a in \cite{ord2}, where several results of \cite{ord1} were essentially improved. These authors observed that the logical order is actually a restriction of Drazin's order (nowdays usually called star order or *-order) introduced by him in  \cite{star0} for all bounded operators on $H$. They proved, for example, that $\Ob$  is even a weak generalized orthomodular poset. Moreover, this poset is \emph{bounded complete}, i.e., every subset bounded from above has a join and, correspondingly, every nonempty subset has a meet.

More recently, existence conditions of joins and meets in $\Ob$ (under the logical ordering), and representations of these operations have been discussed, e.g., in \cite{ord3,ord4,ord5,ord6,ord7}.

We present here some further results on the order structure of $S(H)$, and also fill two small gaps in proofs in \cite{ord1}. In particular, we obtain explicit descriptions for the Gudder join and meet operations in terms of operator composition and lattice operations on projectors, and a simple proof of bounded completeness of $\Ob$. On the other hand, we discuss some of the properties of the poset $\Ob$ in a more abstract setting, and show that the aforementioned properties of the logical order of observables are not quite independent.
For instance, any poset having the least element and possessing the so called \emph{upper bound property} (any pair of elements has a join if they have a common upper bound), if equipped with an appropriate orthogonality relation, carries a structure of a generalized orthoalgebra, in which every initial segment is an orthomodular lattice.
Moreover, there is a non-commutative total binary operation (called \emph{skew meet}) on $\Ob$ which, considered together with the partial join operation, turns the poset into a so called \emph{skew nearlattice}. This allows, on the one hand, to establish a link between structures arising in quantum logic and some branches of the theory of information systems (where the notion of skew nearlattice has emerged; see \cite{skew1}), and on the other hand, to apply to the algebra $\Ob$ certain general decomposition and structure theorems from \cite{skew2}. We do not address, however, these questions in the present paper.

The structure of the paper is as follows. The subsequent section contains the necessary background on posets having the upper bound property, known also as nearsemilattices in algebra and as a (simple version of) domains in database theory. Three principal examples of such posets, including $\Ob$, are considered in Section \ref{exam}. Some order properties of $\Ob$ are discussed also in Section \ref{more}. Section \ref{ortho} deals with so called quasi-orthomodular nearsemilattices, which mimic, in a sense, the generalized orthomodular lattices of \cite{jan}, and Section \ref{over}, with skew nearlattices.

\section{Preliminaries: nearsemilattices}  \label{perlim}

A \emph{nearlattice} is usually defined as a meet semilattice having the upper bound property. Therefore, every bounded complete poset is an example of a nearlattice. Equivalently, a nearlattice is a meet semilattice in which every initial segment is a lattice. Since early eighties, such structures have been intensively studied by W.~Cornish and his collaborators; see, e.g., \cite{nlatt1,nlatt2,nlatt3}. Some authors prefer order duals of such algebras \cite{ch}.

Arbitrary posets having the upper bound property were named (upper) \emph{near\-semi\-lattices} in \cite{nsemi1}. Thus, a nearlattice may be viewed also as a nearsemilattice that happens to be a meet semilattice.
Near-lattices mentioned in \cite{ord1} (see Introduction) is a weaker concept. 

We shall always assume that a near(semi)lattice has the least element $0$, and consider such structures as partial algebras of kind $(A,\vee,0)$, resp., $(A,\wedge,\vee,0)$, where $\wedge$ is, as usual, the meet operation and $\vee$ is the partial join operation. The following axiomatic description of nearsemilattices goes back to \cite[Section 1]{nsemi1} (for arbitrary near(semi)lattice terms $s$ and $t$, we write $s \comp t$ to mean that $s \vee t$ is defined under  that or other intended assignment of values to variables occurring in these terms).

\begin{prop}    \label{oper}
An algebra $(A,\vee,0)$, where $0$ is a nullary operation and $\vee$ is a partial binary operation, is a nearsemilattice if and only if it fulfils the conditions
\begin{veeenum}
\item $x \comp x$ and $x \vee x = x$,   \label{vidm}
\item if $x \comp y$, then $y \comp x$ and $x \vee y = y \vee x$, \label{vcom}
\item if $x \comp y$ and $x \vee y \comp z$, then $ y \comp z$, $x \comp y \vee z$ and $(x \vee y) \vee z = x \vee (y \vee z)$,\label{vass}
\item $x \comp 0$ and $x \vee 0 = x$.   \label{vnul}
\end{veeenum}
\end{prop}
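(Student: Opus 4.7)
\medskip

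The plan is to prove both implications, with the bulk of the work lying in the reverse direction.

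For the forward direction (``only if''), assuming $(A,\vee,0)$ comes from a nearsemilattice with least element $0$, I would just read off the axioms from the standard behaviour of joins in a poset with the upper bound property: \rvee{vidm} is the fact that $x$ is its own least upper bound; \rvee{vcom} is the symmetry of the join relation in its two arguments; \rvee{vnul} follows because $0$ is least, so $x$ itself is the join of $x$ and $0$; and \rvee{vass} is associativity, noting that the hypotheses $x \comp y$ and $x \vee y \comp z$ supply a common upper bound for $x$, $y$, $z$, whence $y \comp z$ and $x \comp y \vee z$ follow automatically and the two iterated joins must coincide (both being the least upper bound of $\{x,y,z\}$).

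For the reverse direction, I would define a relation on $A$ by $x \leq y$ iff $x \comp y$ and $x \vee y = y$, and check first that it is a partial order. Reflexivity is \rvee{vidm}; antisymmetry follows directly from \rvee{vcom}; and for transitivity, from $x\vee y=y$ and $y\vee z=z$ one gets $x\vee y\comp z$, so \rvee{vass} yields $x \comp y\vee z = z$ and $x\vee z=(x\vee y)\vee z=y\vee z=z$. Then \rvee{vnul} combined with \rvee{vcom} shows that $0\leq x$ for every $x$, i.e.\ $0$ is the least element.

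The substantive part is to show that $\vee$, where defined, gives least upper bounds and that any two elements with a common upper bound are $\comp$-related. The key intermediate step I would carry out is a ``converse'' of \rvee{vass}: if $y\comp z$ and $x\comp y\vee z$, then $x\comp y$, $x\vee y\comp z$, and $(x\vee y)\vee z=x\vee(y\vee z)$. This is derived by applying \rvee{vcom} to rewrite the hypotheses as $z\comp y$ and $z\vee y\comp x$, then applying \rvee{vass} to the triple $(z,y,x)$, and finally using \rvee{vcom} once more to swap the arguments back. Once this converse is in hand, the upper bound property drops out immediately: if $x\leq w$ and $y\leq w$, then $y\comp w$ and $x\comp y\vee w=w$, so the converse of \rvee{vass} delivers $x\comp y$, and moreover $(x\vee y)\vee w=x\vee(y\vee w)=x\vee w=w$, showing $x\vee y\leq w$. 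That $x\vee y$ really is an upper bound of $x$ and of $y$ follows from \rvee{vass} applied to $x,x,y$ (giving $x\leq x\vee y$) and symmetrically.

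The main obstacle, as indicated, is the derivation of the converse of \rvee{vass}; without the symmetrization via \rvee{vcom}, the stated associativity axiom only lets one ``reassociate to the right'', and the upper bound property requires precisely the opposite direction. Everything else is formal manipulation of the four axioms.
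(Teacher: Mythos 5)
Your argument is correct, but note that the paper itself offers no proof of this proposition: it is stated as a known result going back to \cite[Section 1]{nsemi1}, so there is nothing internal to compare against. Your two directions are both sound. The forward direction is routine, as you say. In the reverse direction, the one genuinely non-formal step is exactly the one you isolate: the stated associativity axiom $(\vee_{3})$ only re-brackets ``to the right,'' while the upper bound property needs the left-handed version (from $y\comp z$ and $x\comp y\vee z$ deduce $x\comp y$ and $x\vee y\comp z$), and your derivation of that converse by symmetrizing with $(\vee_{2})$ --- apply $(\vee_{3})$ to the triple $(z,y,x)$ and swap arguments back --- checks out. The remaining pieces (reflexivity from $(\vee_{1})$, antisymmetry from $(\vee_{2})$, transitivity and $x\le x\vee y$ from instances of $(\vee_{3})$, the least element from $(\vee_{4})$, and the identification of $x\vee y$ with the least upper bound) are all verified correctly, and together they establish both that $\comp$ coincides with ``has a common upper bound'' and that $\vee$ is the join, which is precisely what the definition of a nearsemilattice requires.
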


The order relation on a nearsemilattice $A$ is recovered from the partial join operation as follows:
\begin{equation} \label{le/vee}
x \le y \text{ if and only if } x \comp y \text{ and } x \vee y = y,
\end{equation}
and then $x \vee y$ (when defined) is the join of $x$ and $y$ w.r.t.\ this order, while 0 is the least element.

\begin{coro}
An algebra $(A,\wedge,\vee,0)$ is a nearlattice if and only if $(A,\vee,0)$ is a nearsemilattice, $(A,\wedge)$ is a semilattice,  and the following absorption laws are fulfilled:
\begin{veeenum}
\item if $x \comp y$, then $x \wedge (x \vee y) = x$,   \label{wabs}
\item $x \wedge y \comp y$ and $(x \wedge y) \vee y = y$.    \label{vabs}
\end{veeenum}
\end{coro}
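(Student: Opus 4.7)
The plan is to unpack both directions against Proposition \ref{oper} and the definition of a nearlattice as a meet semilattice with the upper bound property, using the two absorption laws as the bridge between the orders induced by $\wedge$ and by $\vee$.

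For the forward direction, I would suppose $(A,\wedge,\vee,0)$ is a nearlattice. Then $(A,\wedge)$ is a semilattice by definition, and $(A,\vee,0)$ inherits all of the axioms \rvee{vidm}--\rvee{vnul} of Proposition \ref{oper} because a nearlattice is, in particular, a poset with the upper bound property and least element $0$. The absorption law \rvee{wabs} then follows from $x \le x \vee y$ (valid whenever $x \comp y$) together with the standard semilattice absorption identity, and \rvee{vabs} follows from $x \wedge y \le y$ combined with the characterization \eqref{le/vee} of $\le$ in terms of $\vee$.

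For the converse, assume the three conditions on the right, write $\le_\vee$ for the order given by \eqref{le/vee} and $\le_\wedge$ for the order determined by the total meet. The crux is to show $\le_\wedge \,=\, \le_\vee$. If $x \le_\vee y$, meaning $x \comp y$ and $x \vee y = y$, then \rvee{wabs} yields $x = x \wedge (x \vee y) = x \wedge y$, so $x \le_\wedge y$. Conversely, if $x \wedge y = x$, then the two equations in \rvee{vabs} read $x \comp y$ and $x \vee y = y$, so $x \le_\vee y$. Once the two orders are identified, the upper bound property of $(A,\vee,0)$ transfers to $\le_\wedge$, the element $0$ is the least element via \rvee{vnul}, and $(A,\wedge)$ is a meet semilattice by hypothesis, whence $(A,\wedge,\vee,0)$ is a nearlattice with $\vee$ computing the partial joins.

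I do not anticipate a genuine obstacle here; the corollary is essentially a dictionary between nearsemilattices carrying a total meet and nearlattices, and the two absorption laws are the minimal compatibility conditions forcing the orders induced by $\wedge$ and by $\vee$ to coincide. The only point where I would be careful is in writing out the implication $x \le_\wedge y \Rightarrow x \le_\vee y$, where one must substitute $x$ for the subterm $x \wedge y$ inside \rvee{vabs}, using that $\wedge$ is assumed to be a (total) semilattice operation so that the equation $x \wedge y = x$ can be freely manipulated.
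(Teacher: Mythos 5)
Your argument is correct, and it is exactly the route the paper intends: the corollary is stated without proof, but the remark immediately following it (that the absorption laws can be rewritten in the $\vee$-free form ``if $x \le y$ then $x \wedge y = x$, and $x \wedge y \le y$'') is precisely your key step of showing that the two absorption laws force the order induced by the total meet to coincide with the nearsemilattice order \eqref{le/vee}, after which everything reduces to Proposition \ref{oper} and the definition of a nearlattice as a meet semilattice with the upper bound property. No gaps.
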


Observe that these laws can also be rewritten in a $\vee$-free form, where $\le$ is the ordering (\ref{le/vee}):
\begin{equation}    \label{vee:wedge}
\mbox{if $x \le y$, then $x \wedge y = x$, \ and \ $x \wedge y \le y$.}
\end{equation}

A nearsemilattice $A$ is said to be \emph{distributive} \cite{nsemi2}, if every initial segment of it is a distributive semilattice:
\begin{veeenum}
\item if $y \comp z$ and $x \le y \vee z$, then $x = y' \vee z'$ for some $y' \le y$ and $z' \le z$.     \label{nsdistr}
\end{veeenum}
In the case when $A$ happens to be a nearlattice, this reduces to the standard notion of a distributive nearlattice (every initial segment is a distributive lattice). 

A \emph{De Morgan complementation}, or just \emph{m-complementation}, on a poset is a unary operation $^-$ such that
\[
\mbox{$x^{--} = x$, \ and \ if $x \le y$, then $y^- \le x^-$} .
\]
By a \emph{sectionally m-complemented poset} we mean a poset with the least element, in which every initial segment $[0,p]$ is  equipped with an m-complementation $^-_p$ \cite[Section 2]{nsemi2}. In such a poset, a partial subtraction operation $\ominus$ may be defined  by
\[
x \ominus y = z \text{ if and only if } y \le x \text{ and $z = y^-_x$
in } [0,x].
\]
Lemma 2.5 of \cite{nsemi2}, together with Proposition 2.2, Theorem 2.3 and Corollary 2.4 of that paper, leads us to the following characterization of certain nearlattices.

\begin{prop} \label{subtr}
A sectionally m-complemented poset is a nearlattice if and only if
the partial subtraction $\ominus$ on it can be extended to a total operation $-$ satisfying conditions
\begin{minusenum}
\item if $x \le y$, then $z-y \le z-x$, \label{manti}
\item $x - (x - y) \le y$, \label{mdi}
\item $x-0=x$. \label{-0}
\end{minusenum}
\end{prop}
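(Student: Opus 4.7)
The plan is to repackage Lemma~2.5 of \cite{nsemi2} together with Proposition~2.2, Theorem~2.3 and Corollary~2.4 of that paper: in the forward direction we exhibit a canonical total extension built from the meet, and in the backward direction we reconstruct the meet from the given total subtraction. The two natural candidates are $x - y := x \ominus (x \wedge y) = (x \wedge y)^-_x$ for the extension, and $x \wedge y := x - (x - y)$ for the meet.

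For the forward direction, assume $P$ is a sectionally m-complemented nearlattice. The formula $x - y := (x \wedge y)^-_x$ is well-defined because $x \wedge y \le x$, and it agrees with $\ominus$ since $x \wedge y = y$ whenever $y \le x$. Condition \rminus{-0} is immediate from $0^-_x = x$ in $[0,x]$. For \rminus{manti}, I would combine monotonicity of $\wedge$ (from $x \le y$ derive $z \wedge x \le z \wedge y$) with the anti-monotonicity of the m-complementation $^-_z$ on $[0,z]$. For \rminus{mdi} one computes directly: $x - (x - y) = x \ominus ((x \wedge y)^-_x) = (x \wedge y)^{--}_x = x \wedge y \le y$, the extension coinciding with $\ominus$ because $(x \wedge y)^-_x \le x$.

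For the backward direction, I would verify that $x \wedge y := x - (x - y)$ is the meet of $x$ and $y$. That it lies below $y$ is exactly \rminus{mdi}; that it lies below $x$ follows by two applications of \rminus{manti}, using $0 \le x - y$ and \rminus{-0}. To check it is the greatest lower bound, let $w \le x$ and $w \le y$: then \rminus{manti} applied to $w \le y$ gives $x - y \le x - w$, and a second application yields $x - (x - w) \le x - (x - y)$. The key identity $x - (x - w) = w$ for $w \le x$, valid because $-$ extends $\ominus$ and $w^{--}_x = w$ in $[0,x]$, then delivers $w \le x - (x - y)$. Once meets exist globally, the upper bound property is derived inside any segment $[0,z]$: for $x, y \le z$ the element $(x^-_z \wedge y^-_z)^-_z$ lies in $[0,z]$ and, by the usual m-complementation argument together with the fact that any common upper bound $w$ of $x,y$ meets with $z$ to give an upper bound in $[0,z]$, serves as the global join $x \vee y$.

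The main obstacle, as often in such characterizations, is the identity $x - (x - w) = w$ for $w \le x$: it cannot be derived from \rminus{manti}, \rminus{mdi} and \rminus{-0} alone, but requires genuinely invoking the hypothesis that $-$ extends the partial $\ominus$ rather than being merely compatible with it in some weaker sense. Once this pivot is secured, all remaining steps reduce to routine chasing through the three axioms, as carried out in the cited results of \cite{nsemi2}.
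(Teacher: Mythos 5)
Your proposal is correct and takes essentially the same route the paper indicates: the paper itself omits the argument (deferring to results of the cited reference) but records exactly the formulas you use, namely $x - y = x \ominus (x \wedge y)$ for the forward extension and $x \wedge y = x - (x - y)$, $x \vee y = z - ((z-x) \wedge (z-y))$ for $x,y \le z$ in the converse. You have simply carried out the ``tedious calculations'' the paper skips, including the correct observation that the key identity $x - (x - w) = w$ for $w \le x$ must come from the hypothesis that $-$ extends $\ominus$ rather than from the three axioms alone.
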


Namely, in an m-complemented nearlattice,
\begin{equation}    \label{-/o-}
x - y = x \ominus (x \wedge y)
 \end{equation}
 (see equation (7) in \cite{nsemi2}).
Explicit definitions of join and meet in 
terms of subtraction are not given in \cite{nsemi2}. We omit the tedious calculations and note without proof that
\[
x \wedge y = x - (x - y), \ \text{ and } \ x \vee y =
z -((z -x) \wedge (z - y)) \text{ whenever } x,y \le z .
\]An ordered algebra $(A,-,0)$, where $0$ is the least element and $-$ is a binary operation satisfying \rminus{manti}--\rminus{-0}, is a weak BCK-algebra in the sense of \cite{nsemi2}; see Proposition 2.2 therein.

\section{Examples}  \label{exam}

Let as now consider three examples of nearlattices. The first example may be considered as motivating the concept; the other two are borrowed from \cite{ord1,ord2}.

\begin{exam}[partial functions] \label{pfun}
Let $I$ and $V$ be nonempty sets, and let$\Pfun$ be the set of all partial functions from $I$ to $V$. It is partially ordered by set inclusion, and $(\Pfun, \cap,\cup,\lambda)$, where $\lambda$ is the nonwhere defined function, is a nearlattice with respect this order and usual operations $\cap$ and $\cup$. Actually, $\Pfun$ is even bounded complete, and total functions are just its maximal elements.

Observe that the union of two functions in $\Pfun$ is defined if and only if they agree on the common part of their domains. It is easily seen that $\dom(\phi \cup \psi) = \dom \phi \cup \dom \psi$ and $\dom(\phi \cap \psi) \subseteq \dom \phi \cap \dom \psi$.
Any nearlattice $(\Phi,\cup,\cap,\lambda)$, where $\Phi$ is a subset of some $\Pfun$, is called a \emph{functional nearlattice}. If the nearlattice contains all pairwise unions (of elements of $\Phi$) existing in $\Pfun$, it is said to be \emph{closed}.

Every initial segment of a functional nearlattice $\Phi$ is even a Boolean lattice. Then the corresponding BCK-subtraction (\ref{-/o-}) coincides in $\Phi$ with set subtraction.
\end{exam}

\begin{exam}[random variables]  \label{0fun}
Let $(\Omega,\mathcal A,\mu)$ be a probability space, and let $\Meas$ be the set of random variables on this space (i.e., measurable functions $\Omega \to \RR$). Put $\supp f := \{\omega \in \Omega\colo f(\omega) \neq 0\}$. For $f,g \in \Meas$, write $f \perp g $ if $fg = 0$ (i.e., $\supp f \cap \supp g = \varnothing$), and put $f \preceq g$ if there is a function $h \in \Meas$ such that $f \perp h$ and $f +h = g$.
The relation $\preceq$ is an order on $\Meas$. By \cite[Theorem 3.1]{ord1}, $f \preceq g$ iff $f(\omega) = g(\omega)$ whenever  $\omega \in \supp f$; equivalently, iff $f = g\chi_{\supp f}$ ($\chi_K$ stands for the characteristic function of a subset $K \subseteq \Omega$). Direct calculations show that it is a nearlattice ordering with the zero function $0$ as its least element (Theorem 3.5 in \cite{ord1}). The corresponding meet and join operations, $\curlywedge$ and $\curlyvee$, may be defined as follows:
\[
f \curlywedge g := 
g\chi_{\{\omega \in (\supp f \cap \supp g)\colo f(\omega) = g(\omega)\}} =
g\chi_{\{\supp f \cap \supp g \sminus \supp(f - g)\}}
\]
and, if $f,g \preceq h$,
\[
f \curlyvee g := h\chi_{(\supp f \cup \supp g)}.
\]
By the way, $f \curlywedge g = h\chi_{(\supp f \cap \supp g)}$ in this case. (We have changed the notation of \cite{ord1}, where these operations were denoted by $\wedge$ and $\vee$, respectively.)
Similarly, if $F$ is a subset of $\Meas$ with an upper bound $h$, then $h\chi_{(\bigcup\{\supp f\colo f \in F\})}$ is the least upper bound of $F\colo \Meas$ is bounded complete.
\end{exam}

Observe that this example is essentially subsumed under the previous one: the nearlattice $\Meas$ may be identified with the closed subalgebra $M_{\mathcal A}$ of $\mathcal{PF}(\Omega,\RR_0)$ (where $\RR_0 := \RR \sminus \{0\}$) obtained by replacing every function in $\Meas$ with its codomain restriction to $\RR_0$; this transfer is an isomorphism of $\Meas$ into $\mathcal{PF}(\Omega,\RR_0)$. In particular, every initial segment $[0,g]$ in $\Meas$ is a Boolean lattice, where the complementation of $f$ is $g - f$. Then the BCK-subtraction in $\Meas$ (which we denote by $\bck$) is given by
\[
g \bck f =  g - (f \curlywedge g) = g\chi_{\supp(f - g)}.
\]

\begin{exam}[quantum observables]
We return to the set $\Ob$ of bounded self-adjoint operators on a Hilbert space. The ensuing conventions follow to \cite{ord1}. Let $\Prj$ stand for the set of all projections (idempotent operators).  If $A \in \Ob$, denote by $\ran A$ the range of $A$ and by $\ovran A$, its closure. Denote the projection onto $\ovran A$ by $P_A$. For $A,B \in \Ob$, put $A \perp B$ if the composition $AB$ is the zero operator $O$ (equivalently, if $\ovran A \perp \ovran B$, or $P_AP_B = O$; see Lemma 4.1 in \cite{ord1}), and put $A \preceq B$ if $B = A + C$ for some $C \in \Ob$ with $C \perp A$. By \cite[Lemma 4.3]{ord1}, $A \preceq B$ if and only if $Ax = Bx$ whenever $x \in \ovran A$ (then $\ovran A \subseteq \ovran B$), or, equivalently, if $A = BP_A$ (then $B$ and $P_A$ commute). 
The relation $\preceq$ is the logical order on $\Ob$ mentioned in Introduction.

We shall say that the closed subspace $\ovran A$ of $H$ \emph{corresponds} to $A$. Recall that the transfer $P_A \mapsto \ovran A$ from projection operators to closed subspaces is one-to-one and onto; moreover,  $\Prj$ is ordered by
\[
\mbox{$P_1 \le P_2$ if and only if $P_1P_2 = P_1$ if and only if $P_1 = P_2P_1$},
 \]
 and then $P_A \le P_B$ if and only if $\ovran A \subseteq \ovran B$. In particular, $\Prj$ is lattice ordered, and $P_A \le P_B$ whenever $A \preceq B$. Corollary 4.4 in \cite{ord1} implies that $P_A \le P_B$ if and only if $P_A \preceq P_B$.

We denote the join and meet operations in the lattice $\Prj$ by $\vee$ and $\wedge$, and these operations in $\Ob$ (which may be partial), by $\curlyvee$ and $\curlywedge$, respectively (in \cite{ord1,ord2}, the same symbols $\vee$ and $\wedge$ are used for both purposes).
According to \cite[Corollary 4.13]{ord1}, meets and joins exist in $\Ob$ for every pair of elements bounded above (but see the beginning of the next section). Using this result as the base, it is shown in \cite[Corollary 4.7]{ord2} that $\Ob$ is even bounded complete, in particular, a nearlattice (evidently, $O$ is the least element in it).

It is easily seen that, in every interval $[O,B]$ of $\Ob$, $B - A$ is an m-complement of $A$. If the symbol $\bck$ stands for the BCK-subtraction (\ref{-/o-}) in the nearlattice $\Ob$, then (for arbitrary $A,B \in \Ob$)
\[
B\! \bck A = B - (A \curlywedge B) = B - BP_{A \curlywedge B} = B(I - P_{A \curlywedge B}).
\]
\end{exam}

This example is not fully subsumed under Example \ref{pfun}. The definition of $\preceq$ shows that $A = B$ if and only if
$A|\ovran A = B|\ovran B$. Therefore, an operator in $\Ob$ is completely determined by its restriction to the corresponding subspace of $H$. We thus can identify every operator $A$ with the partial function $A|\ovran A$ from $\Pf(H,H)$, and come in this way to a nearlattice of functions $S_H$ (ordered by set inclusion) isomorphic to $\Ob$. However, as the join of two subspaces generally differs from their set-theoretical union, this nearlattice need not be functional.

The set $S_H$ of partial operators admits also an immediate description. Theorem 4.12 of \cite{ord1} introduces, for any $B \in \Ob$, a subset
\[
L_B := \{P\colo P \le P_B \text{ and } BP = PB\}
\]
of $\Prj$. For example, if $A \preceq B$, then $P_A \in L_B$.
It is shown in the proof of the theorem that the mapping $\ph\colo A \mapsto P_A$ is an order isomorphism of the initial segment $[O,B]$ of $\Ob$ onto $L_B$.
Therefore,
\[ L_B = \{P\colo P = P_A \text{ and } A \preceq B \text{ for some } A \in \Ob\} = \{P_A\colo A \preceq B\}.
\]
It is now easily seen that $S_H = \{B|\ovran C\colo P_C \in L_B\}$. Indeed, $B|\ovran C$ belongs to $S_H$, by definition, if and only if  $B|\ovran C = A|\ovran A$ for some $A \in \Ob$, i.e. if and only if $\ovran C = \ovran A$ and $B|\ovran A = A|\ovran A$, i.e., if and only if $P_C = P_A$ and $A \preceq B$. 

It is also demonstrated in the proof of the mentioned theorem that
\begin{equation}    \label{prop}
\mbox{if $P \le P_B$, then $P = P_{BP}$ and, further, $BP \preceq B$} 
\end{equation}
whenever $BP \in \Ob$ i.e., whenever $BP = PB$. It follows that the inverse of the isomorphism $\ph$ takes a projection $P$ from $L_B$ into the operator $BP \in [O,B]$. Therefore, $[O,B] = \{BP\colo P \in L_B\}$.

\section{More on the ordering of $\Ob$}  \label{more}

In this section we discuss in more detail existence of joins and meets in the poset $\Ob$. 

It is noticed in \cite[Theorem 4.12]{ord1} that every poset $L_B$ is a lattice with respect to $\le$. According to this theorem, the initial segment $[O,B]$ of $\Ob$ is order isomorphic to $L_B$; thus, the segment itself is a lattice. Corollary 4.13 to this theorem then asserts that, in $\Ob$, every pair of elements bounded above has a meet and a join.
Of course, the meet of two elements in an initial segment of a poset is also their meet in the poset itself (and conversely).
However, this may be not the case with joins; this fact seems to be overlooked in \cite{ord1} when drawing the corollary. The subsequent theorem confirms that the corollary is nevertheless correct, and provides explicit descriptions of the corresponding partial operations (Corollary \ref{mj}).

Recall that the commutant of an operator $B$ (i.e., the set of all bounded operators commuting with $B$) is a von Neumann algebra. The lattice $L_B$, being an initial segment in the complete orthomodular lattice of all projections of this algebra, is therefore complete. This observation allows us to obtain a simple direct proof of existence of meets and joins in $\Ob$ for all bounded sets of operators.

\begin{theo}    \label{MJ}
Suppose that $\mathcal T \subseteq \Ob$, $\mathcal T \neq \varnothing$, and that $B \in \Ob$ is an upper bound of $\mathcal T\!$. Then 
\begin{myenum}
\item
$B(\bigwedge(P_A\colo A \in \mathcal T))$ is the greatest lower bound of $\mathcal T\!$, 
\item
$B(\bigvee(P_A\colo A \in \mathcal T))$ is the least upper bound of $\mathcal T\!$. 
\end{myenum}
\end{theo}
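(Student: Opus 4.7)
The plan starts by noting that since $B$ is an upper bound of $\mathcal T$, each $A \in \mathcal T$ satisfies $A \preceq B$ and hence $P_A \in L_B$. Putting $P := \bigwedge(P_A\colo A \in \mathcal T)$ and $Q := \bigvee(P_A\colo A \in \mathcal T)$, these meet and join exist in the complete lattice $L_B$; as elements of $L_B$, both operators commute with $B$ and lie below $P_B$, so by (\ref{prop}) we get $BP, BQ \in [O,B] \subseteq \Ob$. The order isomorphism $\ph\colo [O,B] \to L_B$ from the proof of Theorem 4.12 of \cite{ord1} sends $A \mapsto P_A$, with inverse $P \mapsto BP$, and preserves arbitrary meets and joins between these complete lattices; hence $BP$ is the meet and $BQ$ the join of $\mathcal T$ computed inside the segment $[O,B]$.

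For part (a), I would promote this meet-in-$[O,B]$ to a meet-in-$\Ob$: a meet in an initial segment is automatically a meet in the ambient poset, because any lower bound $D \in \Ob$ of $\mathcal T$ satisfies $D \preceq A_0 \preceq B$ for any fixed $A_0 \in \mathcal T$, hence $D \in [O,B]$ and $D \preceq BP$ by the segment-level meet property. Thus $BP$ is the greatest lower bound of $\mathcal T$ in $\Ob$.

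Part (b) needs more care because an upper bound $C \in \Ob$ of $\mathcal T$ is not, in general, $\preceq$-related to $B$, and so may lie outside $[O,B]$; in principle the join taken in $[O,B]$ need not be the join in $\Ob$. My plan here is: given such a $C$, show $BQ \preceq C$ in two steps. First, check that $Q \in L_C$, i.e., that $Q$ commutes with $C$ and is below $P_C$: each $P_A$ commutes with $C$ (because $A \preceq C$) and satisfies $P_A \le P_C$; the range of $Q$ is the closed linear span of the $C$-invariant subspaces $\ovran A$, hence itself $C$-invariant, which delivers both properties for $Q$. Applying (\ref{prop}) to $C$ then yields $CQ \preceq C$. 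Second, verify that $BQ = CQ$: on every $\ovran A$ with $A \in \mathcal T$ the operators $B$ and $C$ agree with $A$ (by the characterization of $\preceq$), and by linearity and continuity they agree on all of $\ovran Q$. Combining, $BQ = CQ \preceq C$, as required.

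The main obstacle, as I see it, will be this last identity $BQ = CQ$; it is the only step where the two upper bounds $B$ and $C$ must be compared without a direct order relation between them, and it is precisely the point overlooked in the proof of Corollary 4.13 of \cite{ord1}. Once the density-and-continuity argument is in place, the remainder is bookkeeping around the isomorphism $\ph$ and property (\ref{prop}).
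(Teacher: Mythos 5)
Your proposal is correct and follows essentially the same route as the paper: both arguments rest on the completeness of the lattice $L_B$, take $B$ composed with the lattice meet/join of the projections $P_A$, and for the join reduce the comparison with an arbitrary upper bound $D$ to showing that $B$ and $D$ agree on the range of $\bigvee(P_A\colo A\in\mathcal T)$. The paper carries out that last step via the projection computation $(B-D)P_A=O\Rightarrow P_{B-D}P_A=O\Rightarrow P_{B-D}P^{\mathcal T}=O$, whereas you argue by linearity, density and continuity on the closed span of the subspaces $\ovran A$; these are the same fact in slightly different clothing.
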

\begin{proof}  
\newcommand\mT{\mathcal T}
Assume that $A \preceq B$, i.e., $A = BP_A$, for all $A \in \mT\!$. Then all projections $P_A$ belong to the complete lattice $L_B$. This implies that the symbolic expressions in (a) and (b) present operators from $\Ob$.

(a) Let $P_\mT$ stand for the meet (in $\Prj$) of all projections $P_A$ with $A \in \mT$. For every $A \in \mT\!$, we have $BP_{\mT} = BP_AP_\mT = AP_\mT = AP_{BP_\mT}$ (by (\ref{prop}), as $P_\mT \le P_B$), i.e., $BP_\mT \preceq A$. Hence, $BP_\mT$ is a lower bound of $\mT\!$. Further, if $D$ is one more lower bound, then $D \preceq B$, $P_D \le P_\mT$ (as $P_D \le P_A$ for all $A \in \mT$), and $BP_\mT P_D = BP_D = D$. Thus, $D \preceq BP_\mT$. Therefore, $BP_\mT$ is the greatest lower bound of $\mT\!$.

(b) Let $P^\mT$ stand for the join (in $\Prj$) of all projections $P_A$ with $A \in \mT$. For every $A \in \mT\!$, we have $A = BP_A = BP^{\mT}P_A$, i.e., $A \preceq BP^\mT$. Hence, $BP^\mT$ is an upper bound of $\mT\!$; moreover, $BP^\mT \preceq B$, for $P^\mT$ belongs to the complete lattice $L_B$ containing all $P_A$. Further, if $D$ is one more upper bound, then likewise $DP_A = A$ for all $A \in \mT$ and $DP^\mT \preceq D$. 

Now, $(B-D)P_A = 0$ and, consequently, $P_{B-D}P_A = 0$ for every $A \in \mT$. Then in $\Prj$ also $P_A \le I - P_{B-D}$ for every $A$. Hence, $P^\mT \le I - P_{B-D}$, $P_{B-D}P^\mT = 0$ and, finally, $(B-D)P^\mT = 0$. Therefore, $BP^\mT = DP^\mT \preceq D$, and $BP^\mT$ is the least upper bound of $\mT$.
\end{proof}

Notice that $\sup \mathcal T = O$ if $\mathcal T = \varnothing$. Therefore, item (b) of the theorem immediately implies the result of \cite{ord2} that the  poset $\Ob$ is actually bounded complete (see Example 3 above), every initial segment of $\Ob$ is a complete sublattice of $\Ob$, and the embedding $P \mapsto BP$ of $L_B$ into $\Ob$  mentioned at the end of the previous section preserves all meets and joins.

As noted in the previous section, the ordering $\preceq$ agrees on $\Prj$ with the standard ordering $\le$ of projections. In \cite{ord1}, it is implicitly assumed without proof that then the meet (join) of two projection operators in $\Prj$ is also their meet (resp., join) in the more extensive poset $\Ob$. It follows from the theorem that this is indeed the case.

\begin{coro}  \label{pmj}
$\Prj$ is a complete sublattice of $\Ob$.
\end{coro}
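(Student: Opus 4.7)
The plan is to apply Theorem \ref{MJ} directly, using the identity operator as a universal upper bound for projections. First I would note that, for any projection $P \in \Prj$, the range $\ran P$ is already closed and equals the image subspace onto which $P$ projects, so $P_P = P$. Moreover, $I \in \Ob$ satisfies $IP = P$ and $I - P$ is self-adjoint with $(I-P)P = P - P^2 = O$, so $P \perp (I-P)$ and $I = P + (I-P)$ witnesses $P \preceq I$. Hence $I$ is an upper bound in $\Ob$ for every subset $\mathcal T \subseteq \Prj$.

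Next I would take an arbitrary $\mathcal T \subseteq \Prj$ and invoke Theorem \ref{MJ} with $B = I$. Item (a) yields
\[
\inf\nolimits_\Ob \mathcal T = I \cdot \bigwedge(P_P \colo P \in \mathcal T) = \bigwedge \mathcal T,
\]
the meet being taken in the complete lattice $\Prj$; item (b) yields analogously $\sup_\Ob \mathcal T = \bigvee \mathcal T$. Both expressions lie in $\Prj$, so meets and joins of subsets of $\Prj$ computed in $\Ob$ exist, belong to $\Prj$, and agree with the corresponding operations in the projection lattice. This is precisely the statement that $\Prj$ is a complete sublattice of $\Ob$.

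There is essentially no obstacle here: the work has already been done in Theorem \ref{MJ}, and the only point to verify is the identification $P_P = P$ for $P \in \Prj$ together with the fact that $I$ dominates every projection under $\preceq$. The completeness of $\Prj$ as a lattice is the standard fact that the projection lattice of a Hilbert space is complete, which is used tacitly throughout the paper.
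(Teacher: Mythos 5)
Your proposal is correct and follows exactly the paper's own (one-line) argument: apply Theorem \ref{MJ} with $B = I$, using the observation that $P_A = A$ for every projection $A$. You merely spell out the details (that $I$ dominates every projection under $\preceq$, and that the resulting meets and joins land back in $\Prj$) that the paper leaves implicit.
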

\begin{proof}
Consider that $\mathcal T \subseteq \Prj$ (then $P_A = A$ for every $A \in \mathcal T$) and $B = I$.
\end{proof}

We write out the following significant particular case of the above theorem.

\begin{coro}    \label{mj}
If $A,B,C \in \Ob$ and $A,B \preceq C$, then
$A \curlywedge B$ and $A \curlyvee B$ exist, and
\[
A \curlywedge B = C(P_A \wedge P_B), \quad
A \curlyvee B = C(P_A \vee P_B).
\]
\end{coro}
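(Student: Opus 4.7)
The plan is to obtain the corollary as an immediate specialization of Theorem \ref{MJ} to the two-element subset $\mathcal T = \{A,B\}$ of $\Ob$. Since by hypothesis both $A$ and $B$ lie below $C$, the operator $C$ serves as the upper bound $B$ required in the theorem, and the two items of the theorem directly deliver the meet and the join of $\mathcal T$.

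More precisely, for part (a) I would take $P_\mathcal T = P_A \wedge P_B$, the meet being computed in the complete lattice $L_C$ (and, by Corollary \ref{pmj}, the same meet as in $\Prj$). Theorem \ref{MJ}(a) then asserts that $C(P_A \wedge P_B)$ is the greatest lower bound of $\{A,B\}$ in $\Ob$, which is precisely $A \curlywedge B$; in particular, the meet exists. For part (b) I would set $P^\mathcal T = P_A \vee P_B$ and invoke Theorem \ref{MJ}(b): the operator $C(P_A \vee P_B)$ is the least upper bound of $\{A,B\}$, which is by definition $A \curlyvee B$, so this join also exists.

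There is essentially no obstacle here, since Theorem \ref{MJ} already does all of the real work, including the verification that the symbolic expressions $C(P_A\wedge P_B)$ and $C(P_A\vee P_B)$ represent operators belonging to $\Ob$ (they are elements of $[O,C]$, as $P_A \wedge P_B$ and $P_A \vee P_B$ lie in $L_C$). The only thing worth remarking on is that the meet and join of $P_A$ and $P_B$ appearing in the statement may be interpreted either in $L_C$ or in the ambient lattice $\Prj$, but by Corollary \ref{pmj} the two interpretations coincide, so the formulas are unambiguous.
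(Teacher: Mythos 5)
Your proposal is correct and matches the paper exactly: the paper states Corollary \ref{mj} as an immediate particular case of Theorem \ref{MJ} with $\mathcal T = \{A,B\}$ and upper bound $C$, giving no separate proof. Your additional remark that the lattice operations on $P_A$ and $P_B$ may be read either in $L_C$ or in $\Prj$ is a harmless aside (the theorem's proof already computes them in $\Prj$ and observes they land in $L_C$), so nothing is missing.
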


By help of (\ref{prop}), we also conclude that
\begin{equation}    \label{hm}
P_{A \curlywedge B} = P_A \wedge P_B, \quad  P_{A \curlyvee B} = P_A \vee P_B
\end{equation}
whenever the pair $A,B$ is bounded.
Consequently, $\ovran(A \curlyvee B) = \ovran A \sqcup \ovran B$ and $\ovran(A \curlywedge B) = \ovran A \cap \ovran B$ in this case. Examples \ref{pfun} and \ref{0fun} suggest that generally $\ovran(A \curlywedge B) \subseteq \ovran A \cap \ovran B$.

The bounded completeness of $\Ob$ implies that item (a) of Theorem \ref{MJ} may be strengthened:
every nonempty subset of $\Ob$ has the greatest lower bound.
In particular, the set of all lower bounds of operators $A,B \in \Ob$ has the join, which is also its maximum element, i.e., $A \curlywedge B$. 
Observe that $\{P_C\colo C \preceq A \text{ and } C \preceq B\} = L_{A \curlywedge B}$, so that $P_{A \curlywedge B}$ belongs to this set and is the greatest element in it. 
Therefore, we come to the following description of meet of observables in a general case.

\begin{coro}
For every pair of observables $A$ and $B$, their meet exists and
\[ 
A \curlywedge B = B(\max\{P_C\colo C \preceq A \text{ and } C \preceq B\}) .
\] 
\end{coro}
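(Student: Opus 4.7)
My plan is to assemble the observations already laid out in the paragraph immediately preceding the statement. For existence, I would argue as follows: Theorem \ref{MJ}(a) together with the bounded completeness of $\Ob$ yields that every nonempty subset of $\Ob$ has a greatest lower bound. Applied to $S := \{C \in \Ob : C \preceq A \text{ and } C \preceq B\}$ --- which is nonempty (it contains $O$) and bounded above (by $A$) --- bounded completeness provides the join of $S$, and since $A$ and $B$ themselves bound $S$ from above, this join is itself in $S$. Hence $M := A \curlywedge B$ exists for arbitrary $A, B \in \Ob$, and equals the maximum of $S$.

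Next I would identify the set $\{P_C : C \preceq A \text{ and } C \preceq B\}$ with $L_M$. Indeed, $C \preceq A$ and $C \preceq B$ hold simultaneously iff $C \preceq M$, and the description $L_M = \{P_C : C \preceq M\}$ has already been recorded in Example 3. This set has $P_M$ as its maximum: $M \preceq M$ forces $P_M \in L_M$, and $P_C \le P_M$ for every $P_C \in L_M$ because the map $\ph\colo [O,M] \to L_M$ of Example 3 is an order isomorphism. Therefore $\max\{P_C : C \preceq A \text{ and } C \preceq B\} = P_M$.

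Finally, applying the characterization $X = YP_X$ whenever $X \preceq Y$ (from the beginning of Example 3) to $M \preceq B$ yields $M = BP_M$, which is exactly the displayed formula. I expect no step to present a real obstacle: all three ingredients --- bounded completeness, the identification of $L_M$ as the range of $C \mapsto P_C$ over $[O,M]$, and the order isomorphism of $[O,M]$ with $L_M$ --- have been established earlier in the paper, so the corollary is essentially a direct synthesis of them.
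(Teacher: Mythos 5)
Your proposal is correct and follows essentially the same route as the paper: the paragraph preceding the corollary derives it exactly by invoking bounded completeness to get the maximum $M$ of the set of common lower bounds, identifying $\{P_C\colon C \preceq A,\ C \preceq B\}$ with $L_M$ (whose greatest element is $P_M$), and concluding $M = BP_M$ from $M \preceq B$. The paper additionally sketches an alternative ``direct'' proof via the set $L_{A,B}$ and the commutant $\{A,B\}'$, but that is presented as a supplement, not the primary argument.
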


This particular result admits also a simple direct proof (cf.\ Proposition 3.2 in \cite{star}).
Let us consider the following subset of $\Prj$:
\[
L_{A,B} := \{P\colo P \le P_A, P \le P_B,  AP = PA, BP = PB \text{ and } P \perp P_{A - B}\}.
\]
If $C \preceq A,B$, then $AP_C = C = BP_C$, $(A-B)P_C = O$ and $P_C \perp P_{A - B}$. 
Moreover, then also $P_C \in L_A,L_B$. Therefore, $P_C \in L_{A,B}$. On the other hand, if $P  \in L_{A,B}$, then 
$P \perp P_{A - B}$ and $P \perp (A - B)$, i.e., $(A - B)P = O$ and $AP = BP =: C$. 
 As also $P \in L_A$ and $P \in L_B$, we conclude by (\ref{prop}) that $C \preceq A,B$. Therefore,
\(
L_{A,B} = \{P\colo P = P_C \text{ and } C \preceq A,B \text{ for some } C \in \Ob\}
\)
and, finally,
\[
L_{A,B} = \{P_C\colo C \preceq A \text{ and } C \preceq B\} \subseteq L_A \cap L_B .
\]

The definition of $L_{A,B}$ can be rewritten in the form
\[
L_{A,B} = \{P \in \{A,B\}'\colo P \le P_A \wedge P_B \wedge (I - P_{A - B})\},
\]
where $\{A,B\}'$ is the commutant of $\{A,B\}$ and $I$ is the identity operator. Therefore, $L_{A,B}$ is a bounded subset of the complete lattice of all projections of the von Neumann algebra $\{A,B\}'$ and, hence, has the join $P^*$. Evidently, $P^*$ is even the maximum element of $L_{A,B}$. It is easily seen that then 
the operator $BP^*$ is the g.l.b.\ of $A$ and $B$ in $\Ob$.
Indeed, since $P^* \in L_{A,B}$, it follows that $P^* \in L_A, L_B$ and $P^* = P_C$ for some $C$ with $C \preceq A,B$. So, $C = AP^* = BP^*$. Suppose that $D$ is one more lower bound of $A$ and $B$. As $P_D \in L_{A,B}$, it follows that $P_D \le P^* = P_C$. On the other hand, $P_D \in L_A,L_B$. Recall that the transfer $\ph\colo  E \mapsto P_E$ (Section \ref{exam}) is an order isomorphism of $[O,B]$ onto $L_B$; therefore $D \preceq C$, and $C = A \curlywedge B$.

Of course, $AP^*$ also is the g.l.b.\ of $A$ and $B$.
We already know that actually $P^* = P_{A \curlywedge B}$.

\section{Quasi-orthomodular nearsemilattices}   \label{ortho}

In \cite{jan}, M.F.\ Janowitz introduced the notion of a generalized orthomodular lattice, which was defined as a lattice equipped with an appropriate orthogonality relation. We take up the idea and consider in this section nearsemilattices with orthogonality.

A binary relation $\perp$ on a poset $A$ with the least element $0$ is said to be an \emph{orthogonality}, if  it satisfies the conditions
\begin{perpenum}
\item if $x \perp y$, then $y \perp x$,  \label{psym}
\item if $x \le y$ and $y \perp z$, then $x \perp z$,  \label{panti}
\item $x \perp 0$.  \label{pnul}
\end{perpenum}

For example, if $^-$ is an m-complementation on $A$ and a relation $\perp$ is defined by $x \perp y$ iff $y \le x^-$, then $\perp$ is an orthogonality on $A$. We say that it is \emph{induced} by the m-complementation $^-$. Evidently, the induced orthogonality is \emph{additive} in the sense that
\begin{perpenum}
\item if $x \perp y$, $x \perp z$ and $y \comp z$, then $x \perp y \vee z$.  \label{paddit}
\end{perpenum}

\begin{defi}
Suppose that $(A,\vee,0)$ is a nearsemilattice and that $\perp$ is an orthogonality on it. The algebraic system $(A, \vee, \perp, 0)$ is called a \emph{quasi-orthomodular nearsemilattice}
if the following additional conditions are fulfilled:
\begin{perpenum}
\item if $x \perp y$, then $x \comp y$, \label{perpcomp}
\item if $x \le y$, then $y = x \vee z$ for some $z$ with $x \perp z$, \label{vdiff}
\item if $x \perp y$, $x \perp z$ and $y \le x \vee z$, then $y \le z$.    \label{vcanc<}
\end{perpenum}
\end{defi}

In a quasi-orthomodular nearsemilattice, the following cancellation law holds:
\begin{perpenum}
\item if $x \perp y$, $x \perp z$ and $x \vee y = x \vee z$, then $y = z$.    \label{vcanc}
\end{perpenum}
In particular, 
\begin{perpenum}
\item if $x \perp x$, then $x = 0$.     \label{pdeg}
\end{perpenum}
Also, \rperp{paddit} together with \rperp{perpcomp} implies that
\begin{perpenum}
\item every finite set of mutually orthogonal elements has a join
\end{perpenum}
(recall that in a nearsemilattice $x \comp y$ iff $x$ and $y$ have a common upper bound).

\setcounter{exam}{0}
\begin{exam}[continuation]
In a functional nearsemilattice, put $\phi \perp \psi$ if $\dom\phi \cap \dom\psi = \varnothing$. Then $(\Pfun, \cup, \perp,\lambda)$ is a quasi-ortho\-modular nearsemilattice with an additive orthogonality.
The existential condition \rperp{vdiff} may fail in an arbitrary closed functional nearsemilattice. The other existential condition \rperp{perpcomp} is ensured by closedness and may fail in an arbitrary functional nearsemilattice.
\end{exam}

\begin{exam}[continuation]
The relation $\perp$ on $\Meas$ introduced in the Section \ref{exam} is an orthogonality, and $(\Meas, \curlyvee, \perp,0)$ is a quasi-orthomodular nearsemilattice.  The orthogonality satisfies also \rperp{paddit} (Theorem 3.2(b) in \cite{ord2}).
Taking into account that \rperp{vdiff} follows directly from the definition of $\preceq$, these facts can be derived from the previous example, because the isomorphism of $\Meas$ onto the closed functional nearsemilattice $M(A)$, which was described in Section \ref{exam}, preserves orthogonality.
\end{exam}

\begin{exam}[continuation]
Likewise, $(\Ob,\curlyvee,\perp,O)$ is a quasi-orthomodular nearsemilattice for $\perp$ defined as in Section \ref{exam}.
Indeed, (i) if $AB=O$, then $BA = O$ \cite[Lemma 4.1]{ord1}; (ii) if $A \preceq B$ and $BC = O$, then $A = BP_A = P_AB$ and $AC = O$; (iii) $AO = O$; thus, $\perp$ is an orthogonality. Further, (v) if $AB = O$, then $A \preceq A + B$ and $B \preceq A + B$ by the definition of $\preceq$, and $A \comp B$ (as $\Ob$ is a nearsemilattice)
[in addition, (v') $A + B = A \curlyvee B$; see the proof of Theorem 4.3 in \cite{ord2}];
(vi) if $A \preceq B$, then, by the definition and (v'), $B = A + C = A \curlyvee C$ for some $C$ with $A \perp C$; 
(vii) if $AB = O$, $AC = O$ and $B \preceq A \curlyvee C$, then  $A + C = B + D$ for some $D$ with $DB = O$,  $(D - A)B = O$ and, further, $C = B + (D - A)$, i.e., $B \preceq C$.

It was proved in Theorem 4.3 of \cite{ord2} that a weakened version of \rperp{paddit} holds in $\Ob$. The orthogonality on $\Ob$ is in fact additive.
Indeed, suppose that $D \perp A$, $D \perp B$ and $A \comp B$.
Then also $P_D \perp P_A$ and $P_D \perp P_B$. As the lattice $\Prj$ is orthomodular, it follows that $P_D \perp P_A \vee P_B$.
On the other hand, $P_A \vee P_B = P_{A \curlyvee B}$---see (\ref{hm}). Therefore, $P_D \perp P_{A \curlyvee B}$ and
$D \perp A \curlyvee B$. 

The isomorphism of $\Ob$ onto  the nearsemilattice
 $S_H$ described in the Section 3 preserves orthogonality; so we may conclude that $S_H$ is quasi-orthomodular.
\end{exam}

Theorem 4.12 of \cite{ord1} asserts that every initial segment of $\Ob$ 
is isomorphic to an orthomodular lattice. We are now going to generalize this structure theorem to arbitrary quasi-orthomodular nearsemilattices.
Recall that an orthocomplementation, or an \emph{o-complementation}, for short, on a bounded poset $(P,\le,0,1)$ is an m-complementation $^-$ such that $1 = x \vee x^-$ (equivalently,  $0 = x \wedge x^-$) for all $x \in P$. Observe that then the induced orthogonality satisfies \rperp{pdeg}. An o-complemented poset is \emph{orthomodular} if this orthogonality satisfies also \rperp{perpcomp} and \rperp{vdiff}; conditions \rperp{vcanc<} and \rperp{paddit} are fulfilled in every such a poset. An \emph{orthomodular lattice} is a lattice-ordered orthomodular poset.

\begin{theo} \label{oml}
Every initial segment of a quasi-orthomodular nearsemilattice $A$ is an orthomodular lattice, where joins and meets agree with those existing in  $A$.
\end{theo}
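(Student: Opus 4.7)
The plan is to fix $p \in A$ and equip $I_p := [0,p]$ with an o-complementation, then deduce that $I_p$ is an orthomodular lattice. Closure of $I_p$ under $\vee$ is immediate: any $x,y \in I_p$ share $p$ as a common upper bound, so $x \vee y$ exists in $A$ by the upper bound property and lies in $I_p$. For each $x \in I_p$, \rperp{vdiff} produces some $z$ with $x \perp z$ and $x \vee z = p$; the cancellation law \rperp{vcanc} forces $z$ to be unique, and I denote it $x^-$.

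Next I verify in short order that $^-$ is an o-complementation. Involutivity $x^{--} = x$ comes from \rperp{psym}, since $x$ itself meets the defining conditions of $(x^-)^-$. For order reversal, if $x \le y \le p$, then $x \perp y^-$ by \rperp{panti}, and combined with $x \perp x^-$ and $y^- \le p = x \vee x^-$, axiom \rperp{vcanc<} yields $y^- \le x^-$. Since $x \vee x^- = p$ by construction, $^-$ is an o-complementation on $I_p$. The same machinery shows the orthogonality on $I_p$ induced by $^-$ coincides with the restriction of the ambient $\perp$: one direction is \rperp{panti}, while if $x,y \in I_p$ and $x \perp y$, then $y \le p = x \vee x^-$ together with $x \perp x^-$ and \rperp{vcanc<} gives $y \le x^-$.

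Meets in $I_p$ are then produced by the De Morgan formula $x \wedge y := (x^- \vee y^-)^-$, where the inner join exists because $x^-,y^- \le p$. Writing $m := (x^- \vee y^-)^-$, the inequality $m \le x$ follows by applying \rperp{vcanc<} to $m \le p = x \vee x^-$ with $x^- \perp m$ and $x^- \perp x$; symmetrically $m \le y$. That $m$ is the \emph{greatest} lower bound follows from the already-established order-reversing nature of $^-$: any lower bound $n$ of $x,y$ in $I_p$ satisfies $n^- \ge x^-, y^-$, hence $n^- \ge x^- \vee y^-$, whence $n \le m$. So $I_p$ is a lattice.

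For orthomodularity it suffices, in view of the characterisation in the paragraph preceding the theorem, to check \rperp{vdiff} for the induced orthogonality on $I_p$. Given $x \le y$ in $I_p$, apply \rperp{vdiff} inside $A$ to obtain $w$ with $x \perp w$ and $x \vee w = y$; clearly $w \le y \le p$, so $w \in I_p$, and $w \le x^-$ by the coincidence above. Thus $w \le x^- \wedge y$, and the reverse inequality is one more application of \rperp{vcanc<} to $x^- \wedge y \le y = x \vee w$ with $x \perp x^- \wedge y$ and $x \perp w$. Hence $y = x \vee (x^- \wedge y)$, which is the required identity. The main conceptual obstacle is to notice that \rperp{vdiff} and \rperp{vcanc<} jointly encode both the orthocomplement and the orthomodular decomposition; after that, all steps are short exercises with \rperp{psym}, \rperp{panti}, \rperp{vcanc} and \rperp{vcanc<}.
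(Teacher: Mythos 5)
Your proof is correct and follows essentially the same route as the paper's: both define $x^-_p$ as the unique complement supplied by \rperp{vdiff} and \rperp{vcanc}, verify it is an order-reversing involution via \rperp{panti} and \rperp{vcanc<}, and conclude orthomodularity of the o-complemented segment from \rperp{perpcomp} and \rperp{vdiff}. You merely spell out in more detail the steps the paper leaves implicit (the De Morgan construction of meets and the coincidence of the induced orthogonality with the restriction of $\perp$).
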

\begin{proof}
Due to \rperp{vdiff} and \rperp{vcanc}, there is, for every  $x \le p$, a unique element $y \in [0,p]$ such that $x \perp y$ and $x \vee y = p$; we denote this element by $x^-_p$.

The mapping $x \mapsto x^-_p$ is an m-complementation on $[0,p]$. By the definition,  $(x^-_p)^-_p \perp x^-_p$ and $(x^-_p)^-_p \vee x^-_p = p$; hence $(x^-_p)^-_p = x$. If $x \le y \le p$, then $x \perp y^-_p$ by \rperp{panti} and $y^-_p \le x^-_p$ by \rminus{vcanc<} (observe that $y^-_p \le x \vee x^-_p$).

As $x \vee x^-_p = p$, the m-complementation $^-_p$ is even an o-complementation in $[0,p]$, and the interval $[0,p]$ is an o-complemented poset. In virtue of \rperp{perpcomp} and \rperp{vdiff}, the poset is orthomodular. Since an initial segment $[0,p]$ of a nearsemilattice is actually an upper semilattice by definition, the o-complemetantion $^-_p$ turns it into a lattice. The final assertion is now trivial.
\end{proof}

The following conclusion is immediate (recall that a distributive semilattice that happens to be a lattice is also distributive as a lattice).

\begin{coro}
In a distributive quasi-orthomodular nearsemilattice, every initial segment is a Boolean algebra.
\end{coro}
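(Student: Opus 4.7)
The plan is to combine Theorem \ref{oml} with the definition of distributivity for a nearsemilattice in order to reduce the claim to the classical fact that a distributive orthomodular lattice is a Boolean algebra.

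First, fix an element $p$ of the distributive quasi-orthomodular nearsemilattice $A$ and consider the initial segment $[0,p]$. By Theorem \ref{oml}, $[0,p]$ is an orthomodular lattice, with join $\vee$, meet $\wedge$, and an o-complementation $x \mapsto x^-_p$; moreover these lattice operations agree with the ambient partial operations of $A$. Thus it suffices to show that the lattice $[0,p]$ is distributive, since a distributive orthomodular lattice is well known to be Boolean (the o-complementation automatically becomes a Boolean complement).

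Next, distributivity of the lattice $[0,p]$ is read off from condition \rvee{nsdistr}. Indeed, \rvee{nsdistr} says precisely that the upper-semilattice $[0,p]$ is a distributive join-semilattice in the classical sense: whenever $x \le y \vee z$ with $y,z \in [0,p]$, one can write $x = y' \vee z'$ with $y' \le y$ and $z' \le z$. As remarked in the paper just before the corollary, a distributive semilattice that happens to be a lattice is automatically a distributive lattice; applied to $[0,p]$ this gives the distributive law for $\wedge$ and $\vee$.

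Putting these steps together, $[0,p]$ is simultaneously orthomodular and distributive as a lattice, and hence is a Boolean algebra. The only mildly delicate point is the passage from the nearsemilattice-theoretic distributivity condition \rvee{nsdistr} to lattice distributivity of the segment, but this is a well-documented semilattice fact and requires no new ingredient from the quasi-orthomodular structure.
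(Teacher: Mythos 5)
Your proof is correct and follows exactly the route the paper intends: Theorem \ref{oml} makes each segment $[0,p]$ an orthomodular lattice, condition \rvee{nsdistr} makes it a distributive semilattice and hence (being a lattice) a distributive lattice, and a distributive orthomodular lattice is Boolean. The paper states this as ``immediate'' with precisely the parenthetical hint you invoke, so your write-up is just a fuller version of the same argument.
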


It is observed in \cite{ord1} that $\Meas$ is a generalized orthoalgebra, and \cite[Theorem 4.2]{ord1} asserts that so is also $\Ob$. This result extends to arbitrary quasi-orthomodular nearsemilattices.

\begin{defi}    \label{def:genortho}
A \emph{generalized orthoalgebra} is a system $(A,\oplus,0)$, where $\oplus$ is a partial binary operation and $0$ is a nullary operation on $A$, satisfying the conditions (we write here, for arbitrary terms $s$ and $t$, $s \perp t$ to mean that $s \oplus t$ is defined):
\begin{oplusenum}
\item if $x \perp y$, then $y \perp x$ and $x \oplus y = y \oplus x$,    \label{ocom}
\item if $x \perp y$ and $x \oplus y \perp z$, then \\
    \hspace*{0mm}\hfill $y \perp z, x \perp y \oplus z$ and $(x \oplus y) \oplus z = x \oplus (y \oplus z)$,  \label{oass}
\item $x \perp 0$ and $x \oplus 0 = x$,  \label{onul}
\item if $x \perp y$, $x \perp z$ and $x \oplus y = x \oplus z$, then $y = z$,       \label{ocanc}
\item if $x \perp x$, then $x = 0$.  \label{odeg}
\end{oplusenum}
The relation $\le$ defined by
\begin{equation}  \label{le/oplus}
\mbox{$x \le y$ if and only if $y = x \oplus z$ for some $z$ with $x \perp z$},
\end{equation}
is an order on a genralized orthoalgebra $A$ and is called its \emph{natural ordering}.
\end{defi}

\begin{theo}    \label{osumperp}
Suppose that $(A,\vee,\perp,0)$ is a quasi-orthomodular nearsemilattice and that $\oplus$ is a binary operation on $A$ defined by
\begin{equation} \label{oplus/vee}
x \oplus y = z \text{ if and only if } x \perp y \text{ and } z = x \vee y.
\end{equation} 
Then $(A,\oplus,0)$ is a generalized orthoalgebra, and its natural ordering coincides with the nearsemilattice ordering of $A$.
\end{theo}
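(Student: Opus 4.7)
The plan is to verify the five orthoalgebra axioms $(\oplus_1)$--$(\oplus_5)$ for the partial operation $\oplus$ defined by (\ref{oplus/vee}), and then check agreement of the two orderings. Most axioms are immediate consequences of properties already recorded for quasi-orthomodular nearsemilattices; the single delicate step will be associativity.

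The easy axioms pose no difficulty. Axiom $(\oplus_1)$ follows from the symmetry $(\perp_1)$ together with the commutativity built into $(\vee_2)$. Axiom $(\oplus_3)$ follows from $(\perp_3)$ and $(\vee_4)$. Axiom $(\oplus_4)$ is precisely the cancellation law $(\perp_8)$ already noted after the definition of quasi-orthomodular nearsemilattice, and $(\oplus_5)$ is the degeneracy $(\perp_9)$.

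Associativity $(\oplus_2)$ requires, given $x \perp y$ and $x \vee y \perp z$, three things: $y \perp z$, $x \perp y \vee z$, and the equality $(x \vee y) \vee z = x \vee (y \vee z)$. The first (and symmetrically $x \perp z$) is immediate from antitonicity $(\perp_2)$ using $x, y \le x \vee y$; then $y \comp z$ by $(\perp_5)$, so $y \vee z$ is defined; and the associativity identity is exactly $(\vee_3)$. The hard part will be deriving $x \perp y \vee z$, because the definition of a quasi-orthomodular nearsemilattice does not postulate additivity of $\perp$ directly. My plan is to lift to an orthomodular lattice via Theorem \ref{oml}: put $p := (x \vee y) \vee z$, so that every element in play lies in the initial segment $[0, p]$, which carries an o-complementation $^-_p$. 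A preparatory observation to record first is that, for $u, v \in [0, p]$, the ambient relation $u \perp v$ is equivalent to $v \le u^-_p$---one direction uses $u \perp u^-_p$ with $(\perp_2)$, and the converse is $(\perp_7)$ applied with $u^-_p$ in the role of $z$, exploiting $u \vee u^-_p = p \ge v$. Using this, from $x \perp y$ and $x \perp z$ we obtain $y, z \le x^-_p$; the join $y \vee z$ (taken in $A$, but coinciding with its join in the lattice $[0, p]$ by the final assertion of Theorem \ref{oml}) is then bounded by $x^-_p$, yielding $x \perp y \vee z$ as required.

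Agreement of orderings is then straightforward. If $x \le_\oplus y$ in the sense of (\ref{le/oplus}), meaning $y = x \oplus z$ for some $z$ with $x \perp z$, then $y = x \vee z$; using $(\vee_1)$ and $(\vee_3)$ one checks that $x \vee y = y$, so $x \le y$ in the nearsemilattice order via (\ref{le/vee}). Conversely, if $x \le y$ in $A$, then $(\perp_6)$ supplies a $z$ with $x \perp z$ and $y = x \vee z = x \oplus z$, whence $x \le_\oplus y$.
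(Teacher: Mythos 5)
Your proposal is correct and follows essentially the same route as the paper: the easy axioms are dispatched by the same pairings of $(\vee)$- and $(\perp)$-conditions, and the delicate point $x \perp y \vee z$ in associativity is handled exactly as in the paper, by passing to the orthomodular initial segment $[0,p]$ of Theorem \ref{oml} and bounding $y \vee z$ by $x^-_p$. The only difference is that you make explicit the equivalence $u \perp v \iff v \le u^-_p$ inside $[0,p]$, which the paper leaves implicit.
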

\begin{proof}
In virtue of \rperp{perpcomp}, the axioms \roplus{ocom}--\roplus{odeg} are easy consequences of \rvee{vcom}+\rperp{psym}, \rvee{vass}+\rperp{panti}, \rvee{vnul}+\rperp{pnul}, \rperp{vcanc} and \rperp{pdeg}, respectively. We only note in connection with \roplus{oass} that the supposition $x \oplus y \perp z$ implies that (i) $x,y \perp z$ (see \rperp{panti}) and (ii) $x \vee y \comp z$, i.e., $x,y,z \le p$ for some $p$. Then  (in the orthomodular lattice $[0,p]$) $y,z, y \vee z \le x^-_p$ and $x \perp y \vee z$, i.e., $x \perp y \oplus z$. The equivalence (\ref{le/oplus}) follows from \rperp{vdiff}.
\end{proof}

\setcounter{exam}{1}
\begin{exam}[continuation]
The operation $\oplus$ defined by $f \oplus g := f + g$ for $f \perp g$ turns $\Meas$ into a generalized orthoalgebra \cite{ord1}. See the item (i) in the proof of \cite[Theorem 3.2]{ord2} for (\ref{oplus/vee}).
\end{exam}
\begin{exam}[continuation]
If $A \perp B$, then put $A \oplus B := A + B$ \cite{ord1}. Then $\Ob$ is a generalized orthoalgebra \cite[Theorem 4.2]{ord1}. Evidently, $A \oplus B$ is an upper bound of $A$ and $B$, and Corollary 4.5 in \cite{ord1} says that it is actually a least upper bound. Thus, (\ref{oplus/vee}) also holds in $\Ob$.
\end{exam}

The theorem implies that an orthomodular nearsemilattice satisfying \rperp{paddit} is a generalized orthomodular poset (see \cite{ord2} for an appropriate version of the latter notion).

We say that a quasi-orthomodular nearsemilattice has the \emph{Riesz decomposition property} if it satisfies the condition
\begin{oplusenum}
\item if $y \perp z$ and $x \le y \oplus z$, then $x = y' \oplus z'$ for some $y' \le y$ and $y' \le z$,
\end{oplusenum}
where $\oplus$ is the operation (\ref{oplus/vee}) (cf.\ \cite{newtrends} or \cite[Sect.\ 2]{ord2}). By \rperp{perpcomp} and (\ref{oplus/vee}), this property turns out to be equivalent to distributivity.

\begin{theo}
A quasi-orthomodular nearsemilattice is distributive if and only if it has the Riesz decomposition property.
\end{theo}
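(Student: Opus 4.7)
The forward direction is routine. If $y \perp z$ and $x \le y \oplus z$, then by \rperp{perpcomp} we have $y \comp z$ and $y \oplus z = y \vee z$. Distributivity supplies $y' \le y$ and $z' \le z$ with $x = y' \vee z'$; since $y \perp z$, repeated use of \rperp{panti} and \rperp{psym} gives $y' \perp z'$, whence $x = y' \vee z' = y' \oplus z'$.

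For the converse the plan is to reduce an arbitrary decomposition $x \le y \vee z$ (with $y \comp z$) to an orthogonal decomposition inside the initial segment $[0,p]$ where $p := y \vee z$. By Theorem \ref{oml} this segment is an orthomodular lattice, so by the orthomodular law applied to $y \le y \vee z$ we can write $y \vee z = y \vee w = y \oplus w$, where $w := (y \vee z) \wedge y^-_p$ satisfies $w \perp y$. Applying the Riesz property to the orthogonal sum $x \le y \oplus w$ gives $x = y' \oplus w'$ with $y' \le y$ and $w' \le w$. The whole argument then reduces to showing that $w \le z$, for then $w' \le w \le z$ and $x = y' \vee w'$ is the required decomposition.

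The key step is therefore establishing $w \le z$, which I would obtain by applying Riesz to $z$ itself. Since $z \le y \vee z = y \oplus w$, Riesz yields $z = y_1 \oplus w_1$ with $y_1 \le y$ and $w_1 \le w$. By \rperp{panti} the sum $y \oplus w_1$ is defined, and by monotonicity of $\oplus$ in its arguments we have
\[
y \oplus w_1 \ge y_1 \oplus w_1 = z \quad \text{and} \quad y \oplus w_1 \ge y,
\]
so $y \oplus w_1 \ge y \vee z = y \oplus w$. The reverse inequality $y \oplus w_1 \le y \oplus w$ holds since $w_1 \le w$, so $y \oplus w_1 = y \oplus w$. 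The cancellation law \rperp{vcanc} (applicable because $y \perp w_1$ and $y \perp w$) now forces $w_1 = w$. Hence $w = w_1 \le z$, as needed.

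The main obstacle is precisely this step: the obvious orthogonalization $w = (y \vee z) \wedge y^-_p$ need not a priori lie below $z$ in a general orthomodular lattice (it would if and only if $y$ and $z$ commuted). The trick is that Riesz, when fed the element $z$ itself, produces an orthogonal decomposition of $z$ whose $y^-_p$-component is cancellation-forced to coincide with the full complement $w$; this in effect shows that Riesz decomposition implies all pairs in $[0,p]$ commute, which for an orthomodular lattice is equivalent to being Boolean, hence distributive.
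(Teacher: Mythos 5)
Your argument is correct and is essentially the paper's own proof: the paper likewise orthogonalizes by writing $y \vee z = y \vee z_0$ with $y \perp z_0$ (via \rperp{vdiff}; this $z_0$ is your $w = y^-_p$), applies the Riesz property to $z \le y \oplus z_0$ to obtain a component $z_1 \le z_0$ with $z_1 \le z$ and $y \vee z_1 = y \vee z$, and then applies Riesz a second time to $x \le y \oplus z_1$. The only difference is cosmetic: you additionally invoke the cancellation law to conclude $z_1 = z_0$ (i.e.\ $w \le z$) before the second Riesz application, whereas the paper simply carries the possibly smaller $z_1$ forward without needing that identification.
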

\begin{proof}
In a quasi-orthomodular nearsemilattice $A$, if $y \perp z$ and $x \le y \oplus z$, then $x \le y \vee z$ and, by distributivity, $x = y' \vee z'$ for some $y' \le y$ and $z' \le z$. But $y' \perp z'$ in virtue of \rperp{panti}; therefore, $x = y' \oplus z'$. Now suppose that $A$ has the Riesz decomposition property and that $y \comp z$ and $x \le y \vee z$. By \rperp{vdiff}, $y \vee z = y \vee z_0$ for some $z_0$ with $z_0 \perp y$. As $z \le y \vee z_0$, there are $y_1 \le y$ and $z_1 \le z_0$ such that $y_1 \perp z_1$ and $z = y_1 \vee z_1$. Then $y \vee z = y \vee z_1$, $z_1 \le z$ and, in virtue of \rperp{panti}, $y \perp z_1$. It follows that $x = y' \vee z'$ for some $y' \le y$ and $z' \le z_1 \le z$.
\end{proof}

\section{Skew meets on quasi-orthomodular nearsemilattices}  \label{over}

We concentrate in this section on quasi-orthomodular nearsemilattices that admit a non-commutative meet-like operation, and demonstrate that the nearsemilattices in   Examples 1--3 belong to this type of nearsemilattices.

Let $A$ be a quasi-orthomodular nearsemilattice, and assume that orthogonality in it is additive. For $x, y \in A$, we write $x  \sqsub y$ to mean that, for every $z \in A$, $z \perp y$ only if $z \perp x$. We say that $x$ is \emph{overridden} by $y$, if $x \sqsub y$.
The overriding relation $\sqsub$ has the following properties:
\begin{sqsubenum}
\item $\sqsub$ is reflexive and transitive, \label{sqpre}
\item if $x \le y$, then $x \sqsub y$.
\item if $x \sqsub y$ and $x \comp y$, then $x \le y$, \label{sqle}
\item if $x \sqsub z$, $y \sqsub z$ and $x \comp y$, then $x \vee y \sqsub z$. \label{sqadd}
\end{sqsubenum}
Only  \rsqsub{sqle} 
requires some comment. By \rperp{vdiff}, $x \vee y = x_0 \vee y$ for some $x_0$ with $x_0 \perp y$. Since $x \sqsub y$, then $x_0 \perp x$ and, further, $x_0 \perp x \vee y$ (see \rperp{paddit}). Now, $x_0 = 0$ by \rperp{panti} and \rperp{pdeg}.

Therefore, $\sqsub$ is a preorder. We denote by $\para$ the equivalence relation on $A$ induced by it:  $\sqsub$: $x \para y$ iff $x \sqsub y$ and $y \sqsub x$
Abstract overriding relations satisfying \rsqsub{sqpre}--\rsqsub{sqadd} and one more condition
\begin{sqsubenum}
\item if $x \sqsub y$, then $x \para x' \le y$ for some $x'$, \label{sqproj}
\end{sqsubenum}
were introduced in \cite[Definition 2.2]{ov}. Observe that the element $x'$ in the right side of \rsqsub{sqproj} is uniquely defined; in fact, $x' = \max\{u\colo u \sqsubset x \text{ and } u \le y\}$. Indeed, $x' \sqsub x$ and $x' \le y$ by the definition of $\para$; on the other hand, if $u \sqsub x$ and $u \le y$, then $u \sqsub x'$ and $u \comp x'$, whence $u \le x'$ by \rsqsub{sqle}.

If the element
\[
x \rwedge y := \max\{u\colo u \sqsub x \text{ and } u \le y\}
 \]
exists for some $x$ and $y$, we call it a (right-handed) \emph{skew meet} of $x$ and $y$.  

\setcounter{exam}{0}
\begin{exam}[continuation]
In $\Pfun$, $\phi \sqsub \psi$ if and only if $\dom\phi \subseteq \dom \psi$. This overriding relation on $\Pfun$ satisfies \rsqsub{sqproj}: if $\phi \sqsub \psi$ and $\phi' := \psi|\dom \phi$, then $\phi \para \phi' \subseteq \psi$. However, \rsqsub{sqproj} may fail in an arbitrary functional nearsemilattice (even if the latter is  closed). More generally, the skew intersection $\rcap$ on $\Pfun$ is totally defined and is given by $\phi \rcap \psi = \psi|\dom(\phi \cap \psi)$ (see \cite{skew1}). Observe that $\phi \subseteq \psi$ iff $\phi \rcap \psi = \phi$, $\phi \perp \psi$ iff $\phi \rcap \psi = \lambda$ and $\phi \sqsub \psi $ iff $\psi \rcap \phi = \phi$.
\end{exam}

\begin{exam}[continuation]
In $\Meas$, $f \sqsub g$ if and only if $\supp f \subseteq \supp g$.  In connection with \rsqsub{sqproj}, observe that if $f \sqsub g$, then the function $f'$ which agrees with $g$ on $\supp f$ and vanishes outside $\supp f$ belongs to $\Meas$. More generally, the skew meet $\rcurwedge$ on $\Meas$ is a total operation and is given by $f \rcurwedge g = g\chi_{(\supp f \cap \supp g)}$.
\end{exam}

\begin{exam}[continuation]
In $\Ob$, $A \sqsubset B$ if and only if $\ovran A \subseteq \ovran B$. Indeed, suppose that $C \perp A$ whenever $C \perp B$, and choose  $x \in \ovran A$.
Take for $C$ the projection onto the closed subspace $[x]$ spanned by $x$. Then $C \not\perp A$ and, consequently, $C \not\perp B$. Hence, $[x] \subseteq \ovran B$, and $x \in \ovran B$. Conversely, if $\ovran A \subseteq \ovran B$ and $C \perp B$, then $\ovran C \cap \ovran A \subseteq \ovran C \cap \ovran B $ and $C \perp A$.

Equivalently, $A \sqsub B$ if and only if $P_A \le P_B$. The axiom \rsqsub{sqproj} means that if $P_A \le P_B$, then $P_A = P_{A'}$ and $A' \preceq B$ for some $A' \in \Ob$ (equivalently,  $P_A \in L_B$, or $B|\ovran A \in S_H$). A natural candidate for $A'$ is the operator $BP_A$; we, however cannot prove that it is self-adjoint, i.e., that $B$ and $P_A$ commute.

However,  the skew meet operation $\rcurwedge$ in $\Ob$ is total. The bounded subset $L^A_B := \{P\colo P \le P_A, P_A \le P_B \text{ and } BP = PB\}$ of the complete lattice $L_B$ always has the greatest element $P^*$. On the other hand,
 \[L^A_B = \{P_C \in L_B\colo  P_C \le P_A\} =\{P_C\colo  C \sqsub A \text{ and } C \preceq B\}.
\]
Now, a reasoning similar to that at the end of Section \ref{more} demonstrates that $BP^*$ is the skew meet of $A$ and $B$.
\end{exam}

We end with a theorem describing characteristic properties of the skew meet operation $\rwedge$.

\begin{theo}
Suppose that $A$ is a quasi-orthomodular nearsemilattice with an additive orthogonality and that all skew meets $x \rwedge y$ in $A$ exist. Then the algebra $(A, \rwedge)$ is an idempotent semigroup, and the following conditions 
are fulfilled:
\begin{gather}    \label{rwedge1}
\mbox{$x \rwedge y \le y$, \quad $x \rwedge y \sqsub x$}, \\
\mbox{$x \le y$ iff $x \rwedge y = x$,  \quad  $y \sqsub x$ iff  $x \rwedge y = y$}.   \label{rwedge2}
\end{gather}
Moreover, $\rwedge$ is commutative in every initial segment of $A$.
\end{theo}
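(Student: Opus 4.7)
The conditions in (\ref{rwedge1}) and the equivalences in (\ref{rwedge2}) follow directly from the definition of $x \rwedge y$ as the maximum of $\{u\colo u \sqsub x, u \le y\}$; idempotence $x \rwedge x = x$ is immediate since $x$ lies in $\{u\colo u \sqsub x, u \le x\}$ and is clearly its greatest element. For $x \le y$ iff $x \rwedge y = x$: when $x \le y$, the element $x$ is in the defining set, and any other $u$ in the set has $u \le y$ and $u \comp x$ (both below $y$), hence $u \le x$ by \rsqsub{sqle}; the converse is trivial. The equivalence $y \sqsub x$ iff $x \rwedge y = y$ is even more direct, since $y$ then lies in the defining set and is trivially its maximum.

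For associativity I would lean on the universal property
\[
u \le a \rwedge b \text{ if and only if } u \sqsub a \text{ and } u \le b,
\]
and set $L := (x \rwedge y) \rwedge z$ and $R := x \rwedge (y \rwedge z)$. The direction $L \le R$ goes smoothly: from $L \sqsub (x \rwedge y)$ combined with $x \rwedge y \sqsub x$ and $x \rwedge y \le y$ (so also $\sqsub y$), transitivity of $\sqsub$ yields $L \sqsub x$ and $L \sqsub y$; together with $L \le z$, the universal property gives first $L \le y \rwedge z$ and then $L \le R$. The reverse $R \le L$ is where the real work is: one readily derives $R \sqsub x$, $R \sqsub y$, and $R \le z$ from the definition of $R$, but concluding $R \le L$ requires $R \sqsub (x \rwedge y)$. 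The crux is thus the implication ``$u \sqsub x$ and $u \sqsub y$ imply $u \sqsub x \rwedge y$'', which is essentially the projection axiom \rsqsub{sqproj}. I expect this to be the main obstacle, and would establish it from the stated hypotheses by forming $u' := u \rwedge y$ (which exists by assumption), observing $u' \sqsub u$ and $u' \le y$, and showing $u \para u'$ by invoking the orthomodular structure of $[0,y]$ (Theorem \ref{oml}) together with the additivity of $\perp$. Once $u \para u'$ is known, $u' \sqsub u \sqsub x$ together with $u' \le y$ puts $u'$ into the defining set of $x \rwedge y$, hence $u' \le x \rwedge y$; then $u \sqsub u' \le x \rwedge y$ transitively gives $u \sqsub x \rwedge y$.

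Finally, for commutativity within an initial segment $[0,p]$: if $x, y \le p$, then any $u$ with $u \sqsub x$ and $u \le y$ has $u, x \le p$, so $u \comp x$, and \rsqsub{sqle} gives $u \le x$. Hence $x \rwedge y = \max\{u\colo u \le x, u \le y\}$, which coincides with the ordinary lattice meet $x \wedge y$ in the orthomodular lattice $[0,p]$; by symmetry $y \rwedge x = y \wedge x = x \wedge y = x \rwedge y$, so $\rwedge$ is commutative on every initial segment.
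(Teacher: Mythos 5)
Your treatment of idempotence, of (\ref{rwedge1}) and (\ref{rwedge2}), and of commutativity on initial segments is correct and agrees with the paper's own argument. For associativity you also make exactly the paper's reduction: the inequality $(x \rwedge y) \rwedge z \le x \rwedge (y \rwedge z)$ is routine, and everything hinges on the implication ``$u \sqsub x$ and $u \sqsub y$ imply $u \sqsub x \rwedge y$''. You correctly flag this as the crux; at this point the paper simply invokes the projection axiom \rsqsub{sqproj}.

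The gap is that \rsqsub{sqproj} is not among the hypotheses of the theorem, and your proposed derivation of it cannot succeed. The statement you need, namely $u \para u \rwedge y$ whenever $u \sqsub y$, is not a consequence of quasi-orthomodularity, additivity of $\perp$ and totality of $\rwedge$ --- it \emph{is} \rsqsub{sqproj}, restated via the identification $x' = \max\{v\colon v \sqsub x,\ v \le y\}$ made in the paper just before the definition of skew meet, so your plan is circular. Moreover, the paper's own discussion of $\Ob$ concedes that \rsqsub{sqproj} cannot be verified there (one cannot show that $BP_A$ is self-adjoint when $P_A \le P_B$), and it genuinely fails: in $\Ob$ with $H = \mathbb C^2$, take $B = \left(\begin{smallmatrix}1&1\\1&0\end{smallmatrix}\right)$ and $A$ the projection onto the first coordinate. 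Then $P_B = I$, so $A \sqsub B$, but the only nonzero projection below $P_A$ is $P_A$ itself and it does not commute with $B$; hence $A \rwedge B = O$ and $A \not\para A \rwedge B$. Worse, with $x = z = A$ and $y = B$ one computes $(x \rwedge y) \rwedge z = O \rwedge A = O$ while $x \rwedge (y \rwedge z) = A \rwedge A = A$, so associativity itself fails in $\Ob$ although $\Ob$ satisfies all the stated hypotheses. So the obstacle you identify is real and cannot be overcome from the given assumptions: neither your sketch nor the paper's proof (which shares the same appeal to \rsqsub{sqproj}) closes it, and the semigroup claim requires \rsqsub{sqproj} as an additional hypothesis.
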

\begin{proof}
Evidently, the operation $\rwedge$ is idempotent, and both inequalities (\ref{rwedge1}) are trivial. To prove that the operation is associative, observe that
\begin{multline} \label{left}
(x \rwedge y) \rwedge z = \max(v\colo v \sqsub \max(u\colo u \sqsub x \text{ and } u \le y) \text{ and } v \le z) \\
= \max(v\colo v \sqsub u \sqsubset x \text{ and } u \le y \text{ for some } u, \text{ and } v \le z)
\end{multline}
and
\begin{multline}  \label{right}
x \rwedge (y \rwedge z) = \max(v\colo v \sqsubset x \text{ and } v \le \max(u\colo u \sqsubset y \text{ and } u \le z)) \\
= \max(v\colo v \sqsubset x, \text{ and } v \le u \le z \text{ and } u \sqsubset y \text{ for some } u) \\
= \max(v\colo v \sqsub x, v \le z \text{ and } v \sqsub y).
\end{multline}
Now notice that if an element $v$ satisfies, for some $u$, the conditions
\( v \sqsub u \sqsubset x,\ u \le y \text{ and } v \le z \)
from (\ref{left}), then it satisfies
also the conditions 
\( v \sqsubset x, v \le z \text{ and } v \sqsubset y\)
from (\ref{right}). Conversely, if the latter triple of conditions is satisfied, then also the former one is satisfied with $u = x \rwedge y$. We only note that if $v \sqsub x,y$, then there is $v'$ such that $v \para v' \le y$ (see \rsqsub{sqproj}), and, further, $v' \le u$; it follows that $v \sqsub u$.

Therefore, the maxima in (\ref{left}) and (\ref{right}) are equal.
Next, if $x \le y$ and $u \le y$, then $x \comp u$ and, if also $u \sqsub x$, then $u \le x$. Thus, $x \rwedge y = x$. The converse is evident by (\ref{rwedge1}), and this proves the first identity in (\ref{rwedge2}). 
Further, if $y \sqsubset x$, then $y \le x \rwedge y$, and the converse again comes from (\ref{rwedge1}); this proves the other identity. 

At last, it follows from \rsqsub{sqle} by virtue of \rperp{panti} that $x \rwedge y = x \wedge y = y \rwedge x$ when $x \comp y$.
\end{proof}

The theorem shows that the algebra $(A,\vee,\rwedge,0)$ is a right normal skew nearlattice in the sense of \cite{skew1,skew2}. Thus, in particular, $\Ob$ is such an algebra.

\end{document}